\documentclass[11pt]{amsart}

\usepackage[T1]{fontenc}
\usepackage[utf8]{inputenc}
\usepackage{lmodern}
\usepackage{microtype}
\usepackage{mathtools}
\usepackage{amssymb}
\usepackage[hidelinks]{hyperref}

\hypersetup{%
  pdftitle={A Proof of the Imbalance Conjecture},
  pdfauthor={James Alexander Schreib; Yousof Yavari},
  pdfsubject={Graph theory; graphic degree sequences; edge imbalance},
  pdfkeywords={edge imbalance, graphic sequence, degree sequence, Erdos-Gallai theorem}
}

\numberwithin{equation}{section}

\newtheorem{theorem}{Theorem}[section]
\newtheorem{lemma}[theorem]{Lemma}

\DeclareMathOperator{\imb}{imb}
\newcommand{\mset}[1]{\left\{\!\left\{#1\right\}\!\right\}}

\title[A proof of the Imbalance Conjecture]{A Proof of the Imbalance Conjecture}

\author{James Alexander Schreib}
\address{New York University}
\email{jas10320@nyu.edu}

\author{Yousof Yavari}
\address{The University of British Columbia, Vancouver, British Columbia, Canada}
\email{yousofy@student.ubc.ca}

\subjclass[2020]{05C07, 05C99}
\keywords{edge imbalance, graphic sequence, degree sequence, Erd\H{o}s--Gallai theorem}
\date{August 15, 2026}

\begin{document}

\begin{abstract}
For an edge $uv$ of a finite simple graph $G$, its imbalance is
$|d_G(u)-d_G(v)|$, and the imbalance multiset $M_G$ consists of the
imbalances of all edges of $G$.  Kozerenko and Skochko conjectured that
$M_G$ is graphic whenever every edge has positive imbalance.  We prove
this conjecture.  The main ingredient is the following capacity bound:
for every set $A$ of $k$ edges,
\[
  \sum_{e\in E(G)\setminus A}\min\{k,\imb_G(e)\}
  \ge k\max\{\Delta-k,0\},
\]
where $\Delta$ is the maximum degree of $G$.  This bound yields all
Erd\H{o}s--Gallai inequalities directly; a parity computation completes the
proof.
\end{abstract}

\maketitle

\section{Introduction}

All graphs in this paper are finite, simple, and undirected.  For a vertex
$v$ of a graph $G$, write $d_G(v)$ for its degree.  The \emph{imbalance} of
an edge $uv\in E(G)$ is
\[
  \imb_G(uv)=|d_G(u)-d_G(v)|.
\]
The sum of all edge imbalances was introduced by Albertson as a measure of
graph irregularity~\cite{Albertson1997}.  Kozerenko and Skochko subsequently
studied the multiset of the individual edge imbalances and formulated the
following conjecture~\cite{KozerenkoSkochko2014}.  Kozerenko and Serdiuk later
recalled the conjecture and reported that it had been computationally verified
for all graphs of order at most twelve~\cite{KozerenkoSerdiuk2023}.

A finite multiset of nonnegative integers is called \emph{graphic} if it is
the multiset of vertex degrees of a finite simple graph.  We write
\[
  M_G=\mset{\imb_G(e):e\in E(G)}
\]
for the imbalance multiset of $G$.

\begin{theorem}[Imbalance Conjecture]\label{thm:main}
Let $G$ be a finite simple graph.  If
\[
  \imb_G(e)>0 \qquad\text{for every }e\in E(G),
\]
then $M_G$ is graphic.
\end{theorem}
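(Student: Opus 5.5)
We verify the two Erdős--Gallai conditions for the multiset $M_G$: even total sum, and, for the $k$ largest imbalances $d_1\ge\dots\ge d_k$,
\[
\sum_{i\le k} d_i \le k(k-1)+\sum_{i>k}\min\{k,d_i\}.
\]
The plan has three steps: the parity computation, a capacity bound on the tail sum, and a short combination of the two.

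\emph{Parity.} Since $|x-y|\equiv x+y \pmod 2$, we get
\[
\sum_e \imb_G(e)\equiv \sum_{uv\in E}(d(u)+d(v)) = \sum_v d(v)^2 \equiv \sum_v d(v) = 2|E| \pmod 2.
\]
So the total is even. This step is routine.

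\emph{Capacity bound.} The heart of the argument is the bound from the abstract: for every set $A$ of $k$ edges,
\[
\sum_{e\notin A}\min\{k,\imb(e)\}\ge k\max\{\Delta-k,0\}.
\]
To prove it, take a vertex $v$ of maximum degree $\Delta$. Every edge $vu$ has $\imb(vu)=\Delta-d(u)\ge 1$ by hypothesis. I would build $k$ edge-disjoint structures, or rather a charging scheme, showing that each of $k$ ``units'' receives total capacity at least $\Delta-k$ from edges outside $A$.

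A natural first attempt is to use the $\Delta$ edges at $v$. At most $k$ of them lie in $A$, leaving at least $\Delta-k$ edges, each contributing $\min\{k,\Delta-d(u)\}$. The trouble is that a contribution can be as small as $1$. When a neighbor $u$ has degree close to $\Delta$, I would continue along a path from $u$. Since $d(u)\ge \Delta-j$ with small imbalance, $u$ has many further edges; follow a decreasing-degree path, or use the telescoping identity that imbalances along a path from degree $\Delta$ down to degree $d$ sum to at least $\Delta-d$.

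So the scheme is: for each of $k$ ``threads'', find an edge-disjoint path structure from $v$ whose imbalances telescope to at least $\Delta-k$ once truncated at $k$, avoiding $A$. A cleaner alternative is induction on $k$, or a flow/Menger argument on the degree-decreasing orientation: orient each edge from higher to lower degree, which is acyclic because every imbalance is positive. Then lower-bound the min-cut capacities with $\min\{k,\cdot\}$ against the removal of $k$ edges.

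I expect this step to be the main obstacle. Handling edges whose imbalance exceeds $k$ (which get truncated), together with removing $A$, is where the real counting lives. I would first try the orientation-plus-Menger approach: vertices of degree $\ge\Delta-t$ must send out many edges to lower-degree vertices, and summing over level sets $t=1,\dots,k$ should give the bound.

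\emph{Combination.} Each imbalance is at most $\Delta-1$, since an edge with imbalance $\Delta$ would need an endpoint of degree $0$. Hence
\[
\sum_{i\le k} d_i\le k(\Delta-1).
\]
If $\Delta\le k$, this is at most $k(k-1)$ and the Erdős--Gallai inequality holds trivially. Otherwise, apply the capacity bound with $A$ equal to the edges carrying the $k$ largest imbalances:
\[
k(k-1)+\sum_{i>k}\min\{k,d_i\}\ge k(k-1)+k(\Delta-k)=k(\Delta-1)\ge\sum_{i\le k}d_i.
\]
With parity, the Erdős--Gallai theorem then shows that $M_G$ is graphic.
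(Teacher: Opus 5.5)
Your parity computation and your final combination step are correct and coincide with the paper's: the bound $\imb_G(e)\le\Delta-1$, the trivial case $k\ge\Delta$, and choosing $A$ to be the edges carrying the $k$ largest imbalances. The gap is the capacity bound itself. It is the entire substance of the theorem, and you only gesture at it. You list several strategies (telescoping along decreasing-degree paths, an acyclic orientation with Menger, summing over level sets) but carry none of them out. You also give no mechanism for controlling what $A$ removes beyond the edges at $v$.

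Worse, the degree-decreasing direction is misleading. In $K_{\Delta-1,\Delta}$ every edge has imbalance $1$, and every vertex of degree $\Delta-1$ has all its neighbors of degree $\Delta$. So no decreasing path from a maximum-degree vertex extends past its first edge, and near-maximum-degree vertices need not have any downhill edges at all. There the required capacity has to come from edges going back \emph{up} to degree $\Delta$, which contribute only through $\imb_G\ge1$.

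The missing idea is that one step away from a maximum-degree vertex $x$ suffices. The argument uses edge counts rather than telescoping, and organizes the bound as $r=\Delta-k$ units of size $k$.

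Let $a$ be the number of edges of $A$ at $x$. Of the $\Delta-a\ge r$ edges at $x$ outside $A$, use $r$ of them, with far endpoints $R$, and hold the remaining $k-a$ in reserve. For $z\in R$ one has the exact identity $\min\{k,\Delta-d_G(z)\}+(d_G(z)-r)^{+}=k$, so the shortfall of the edge $xz$ is $(d_G(z)-r)^{+}$.

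Since $z$ has at most $r$ neighbors in $R\cup\{x\}$, it has at least $(d_G(z)-r)^{+}$ edges leaving $R\cup\{x\}$. Each of these escape edges has imbalance at least $1$, and each is counted once because it has only one endpoint in $R$. The escape edges avoid $x$, so $A$ contains at most $k-a$ of them, and that loss is paid for by the $k-a$ reserve edges at $x$.

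With $\rho=\sum_{z\in R}(d_G(z)-r)^{+}$ this gives $\sum_{e\notin A}\min\{k,\imb_G(e)\}\ge rk-\rho+(k-a)+(\rho-(k-a))^{+}\ge rk=k(\Delta-k)$. Without this, or an equally concrete argument, your proposal reduces the theorem to the lemma but does not prove it.
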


\section{Preliminaries}

We use the following classical criterion for graphic sequences.

\begin{theorem}[Erd\H{o}s--Gallai~\cite{ErdosGallai1960}]\label{thm:EG}
Let $n\ge 1$ and let
\[
  x_1\ge x_2\ge\cdots\ge x_n\ge0
\]
be integers.  Then $(x_1,\ldots,x_n)$ is the degree sequence of a finite
simple graph if and only if $\sum_{i=1}^n x_i$ is even and, for every
$1\le k\le n$,
\begin{equation}\label{eq:EG}
  \sum_{i=1}^k x_i
  \le k(k-1)+\sum_{i=k+1}^n\min\{k,x_i\}.
\end{equation}
\end{theorem}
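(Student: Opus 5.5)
Necessity is a double count and sufficiency goes by induction on $\sum_i x_i$, in the spirit of Choudum's argument.

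\emph{Necessity.} Suppose $(x_1,\ldots,x_n)$ is realized by a graph $H$ on vertices $v_1,\ldots,v_n$ with $d_H(v_i)=x_i$. Evenness of $\sum_i x_i$ is the handshake lemma. Fix $k$ and let $S=\{v_1,\ldots,v_k\}$. The sum $\sum_{i\le k}x_i$ counts each edge inside $S$ twice and each edge from $S$ to its complement once. Edges inside $S$ contribute at most $2\binom{k}{2}=k(k-1)$. A vertex $v_i$ with $i>k$ sends at most $\min\{k,x_i\}$ edges into $S$, since it has only $k$ possible neighbours there and degree $x_i$. This gives \eqref{eq:EG}.

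\emph{Sufficiency.} The base case is the all-zero sequence, which is realized by the empty graph. Otherwise, discard trailing zeros; this changes neither the parity nor any inequality \eqref{eq:EG}, since $\min\{k,0\}=0$. So assume $x_n\ge1$. Choose $t<n$ to be the largest index with $x_t=x_1$, or more cautiously the index that keeps the sequence sorted. Form $x'$ by lowering $x_t$ and $x_n$ by one each and re-sorting. The new sum is even and smaller. The induction then gives a realization $H'$ of $x'$, in which $v_t$ and $v_n$ each lack one unit of degree.
\begin{itemize}
\item If $v_tv_n\notin E(H')$, add this edge.
\item Otherwise $x'_t\ge1$ and $x'_t\ge x'_n$. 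A degree comparison then gives a vertex $w$ adjacent to $v_t$ but not to $v_n$, and a vertex $z$ adjacent to $w$ and different from $v_t$, arranged so that the switch below is legal.
\end{itemize}
In the second case, delete $v_tw$, add $wv_n$, and adjust with a standard $2$-switch so that both $v_t$ and $v_n$ gain one. The cleanest version may be to choose $x'$ so that $v_t$ only needs to gain a neighbour among vertices not adjacent to it, using $x_t<n-1$ or else handling $x_1=n-1$ separately.

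\emph{Main obstacle.} The hardest step is checking that $x'$ still satisfies every inequality \eqref{eq:EG}. The left side drops by $0$, $1$ or $2$ depending on whether $t$ and $n$ lie among the first $k$ indices. The right side can drop by one only if $x_n\le k$ (or $x_t\le k$) for an index beyond $k$. I would split into the cases $k<t$, $t\le k<n$ and $k=n$.

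The delicate case is when \eqref{eq:EG} was tight for some $k<t$ and the right side loses a unit through $\min\{k,x_n\}$. There I would use the maximality of $t$: all of $x_1,\ldots,x_t$ are equal. I would then compare the tight inequality at $k$ with the inequality at $k+1$, or with the one at $t$, to derive a contradiction. Tightness forces every $x_i$ with $i>k$ to be at least $k$, which contradicts $x_n\le k-1$ together with the slack at the neighbouring index.

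If this case analysis becomes unwieldy, my fallback is the Tripathi--Vijay style constructive proof. It builds the graph greedily and shows that the first violated step exhibits a failing inequality \eqref{eq:EG}.
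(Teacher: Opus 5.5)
The paper does not prove this theorem; it cites it from Erd\H{o}s and Gallai and uses it as a black box, so your argument has to stand on its own. Necessity is fine. A Choudum-type induction with $t$ the largest index below $n$ having $x_t=x_1$ is a sound framework; with this $t$ the sequence $x'$ is already nonincreasing.

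The augmentation step, however, is wrong as written. You take $w$ \emph{adjacent} to $v_t$ and delete $v_tw$, which lowers the degree of $v_t$. A $2$-switch preserves every degree, so your operations leave $v_t$ two short instead of giving both $v_t$ and $v_n$ one more edge. Take instead $w\ne v_t$ \emph{non}-adjacent to $v_t$. It exists because $d_{H'}(v_t)=x_t-1\le n-2$ by \eqref{eq:EG} at $k=1$, and $w\ne v_n$ because $v_tv_n\in E(H')$. Writing $w=v_j$, we have $d_{H'}(w)=x_j\ge x_n>d_{H'}(v_n)$. Since $v_t$ is a neighbour of $v_n$ but not of $w$, some $z\in N_{H'}(w)$ is neither $v_n$ nor a neighbour of $v_n$. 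Replacing $wz$ by $v_tw$ and $v_nz$ finishes, with no special treatment of $x_1=n-1$.

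The more serious gap is the verification of \eqref{eq:EG} for $x'$, which must use the parity hypothesis. Indeed, $(1,1,1)$ satisfies every inequality \eqref{eq:EG}, but your reduction sends it to $(1,0,0)$, which violates \eqref{eq:EG} at $k=1$. Your plan never invokes parity because it omits the case $k<t$ with $x_1=k$, where both $\min\{k,x_t\}$ and $\min\{k,x_n\}$ drop. (If $x_1<k$, the left side is at most $k(k-1)$ and there is nothing to do.) In this case you need $\sum_{i>k}\min\{k,x'_i\}\ge k$. The index $t$ gives $k-1$, the index $n$ gives $x_n-1$, and the remaining $n-k-2$ indices give at least $1$ each. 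This suffices unless $n=k+2$ and $x_n=1$, i.e.\ $x=(k,\ldots,k,1)$ with $k+1$ copies of $k$, and that sequence has odd sum $k(k+1)+1$.

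In the one-unit case ($k<t$, $x_1>k\ge x_n$, tight at $k$), your stated mechanism does not work. Tightness does not force $x_i\ge k$ for $i>k$: the graphic sequence $(3,3,2,1,1)$ is tight at $k=2$. Also, excluding only $x_n\le k-1$ would leave $x_n=k$ open. What works is the block structure $x_{k+1}=\cdots=x_t=x_1>k$:
\begin{itemize}
\item tightness at $k$ reads $\sum_{i>t}\min\{k,x_i\}=k(x_1-t+1)$;
\item \eqref{eq:EG} at $k+1\le t$ gives $\sum_{i>t}\min\{k+1,x_i\}\ge(k+1)(x_1-t+1)$;
\item subtracting, at least $x_1-t+1$ indices $i>t$ have $x_i\ge k+1$;
\item hence $\sum_{i>t}\min\{k,x_i\}\ge k(x_1-t+1)+x_n$, contradicting tightness since $x_n\ge1$.
\end{itemize}
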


For an integer $q$, write $q^{+}=\max\{q,0\}$.

\section{A capacity bound}

\begin{lemma}[Capacity bound]\label{lem:capacity}
Let $G$ satisfy the hypothesis of Theorem~\ref{thm:main}, and suppose that
$E(G)\ne\varnothing$.  Let
\[
  \Delta=\max_{v\in V(G)}d_G(v).
\]
For every $A\subseteq E(G)$ with $|A|=k$,
\begin{equation}\label{eq:capacity}
  \sum_{e\in E(G)\setminus A}\min\{k,\imb_G(e)\}
  \ge k(\Delta-k)^{+}.
\end{equation}
\end{lemma}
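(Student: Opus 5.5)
The plan is to reduce \eqref{eq:capacity} to a bound that does not involve $A$, and then prove that bound by charging against a vertex of maximum degree. If $k\ge\Delta$ the right-hand side is $0$ and there is nothing to prove, so assume $1\le k<\Delta$. Every summand satisfies $\min\{k,\imb_G(e)\}\le k$. Deleting the $k$ edges of $A$ therefore lowers the full sum by at most $k^2$. Hence it suffices to show
\[
  \Phi_k:=\sum_{e\in E(G)}\min\{k,\imb_G(e)\}\ \ge\ k\Delta \qquad(1\le k<\Delta),
\]
because then the left side of \eqref{eq:capacity} is at least $k\Delta-k^2=k(\Delta-k)$.

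To prove $\Phi_k\ge k\Delta$, fix $v$ with $d_G(v)=\Delta$ and let $u_1,\dots,u_\Delta$ be its neighbours. Put $a_i=\Delta-d_G(u_i)$. The hypothesis of Theorem~\ref{thm:main} gives $a_i\ge1$, and the edge $vu_i$ contributes exactly $\min\{k,a_i\}$ to $\Phi_k$. The goal is to assign to each $u_i$ a total weight of at least $k$, using disjoint portions of $\Phi_k$.

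If $a_i\ge k$, the edge $vu_i$ alone supplies $k$. If $a_i<k$, we still need $k-a_i$ more. Here $u_i$ has $d_G(u_i)-1=\Delta-a_i-1\ge\Delta-k$ further incident edges, each with imbalance at least $1$ and so each contributing at least $1$. These edges are the natural source of the missing weight. For an edge $u_iu_j$ between two neighbours of $v$, the imbalance is exactly $|a_i-a_j|\ge1$. This pins down which neighbours can be adjacent and can be used to split such an edge's contribution between its two ends.

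The main obstacle is the double counting in this step. An edge can lie at two deficient neighbours $u_i,u_j$, or at a neighbour and a vertex outside $N(v)$ that is shared by several $u_i$. A naive halving only gives about $(\Delta-a_i-1)/2$ per deficient vertex, which may fall short of $k-a_i$. What I would try first is to charge each edge $xy$ with $d_G(x)>d_G(y)$ to its lower-degree endpoint $y$. Each $u_i$ with small $a_i$ has degree close to $\Delta$, so few of its neighbours can have higher degree. Most of its $\Delta-a_i-1$ other edges are then charged to it alone, and their imbalances (not just the bound $1$) should supply the missing $k-a_i$.

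If this charging does not close, the fallback is a weighted argument. Give each $u_i$ the budget $k$ and distribute $\min\{k,\imb(e)\}$ of each edge $e=xy$ in proportion to degree differences along a path up to $v$. The aim is to show that the total weight reaching the star of $v$ is at least $k\Delta$.
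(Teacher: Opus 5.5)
\paragraph{The gap: the reduced inequality is false.} Your plan breaks at the reduction. The intermediate inequality $\Phi_k\ge k\Delta$ does not hold in general, so neither the lower-endpoint charging nor any weighted variant can prove it.

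Here is a counterexample. Join a vertex $x$ to $p_1,p_2,p_3,q_1,q_2$, and add all six edges $p_iq_j$. The degrees are $d_G(x)=5=\Delta$, $d_G(p_i)=3$ and $d_G(q_j)=4$. The imbalances are $2$ on each $xp_i$, $1$ on each $xq_j$, and $1$ on each $p_iq_j$, so all are positive. For $k=3$,
\[
  \Phi_3=3\cdot 2+2\cdot 1+6\cdot 1=14<15=k\Delta ,
\]
and for $k=4$ you get $\Phi_4=14<20$. More generally, take $|P|=q+1$ and $|Q|=q$ with the same construction. Then $\Phi_{2q}=3q^2+q$ against $k\Delta=4q^2+2q$.

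The lemma itself is harmless on this graph: for $k=3$ its right side is only $6$. So the loss is entirely in your step ``deleting $A$ costs at most $k^2$, so it suffices to reach $k\Delta$.'' In your own terms, the star at $x$ has total deficit $2\cdot 2+3\cdot 1=7$ when $k=3$. Only six edges lie outside the star, and each edge $p_iq_j$ sits at two deficient neighbours. This is exactly the double counting you worried about, and here it cannot be charged away.

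\paragraph{The missing idea: use $A$ inside the argument.} Do not discard $A$ up front. Let $a\le k$ be the number of edges of $A$ at $x$. Then at least $\Delta-a\ge r:=\Delta-k$ edges at $x$ lie outside $A$. Keep only $r$ of them, say $xz$ for $z\in R$ with $|R|=r$. The threshold becomes $r$ instead of $\Delta$, and the identity
\[
  \min\{k,\imb_G(xz)\}+(d_G(z)-r)^+=k
\]
holds exactly.

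Since $z$ has at most $r$ neighbours in $R\cup\{x\}$, it has at least $(d_G(z)-r)^+$ edges leaving $R\cup\{x\}$. Each such escape edge has exactly one endpoint in $R$, so nothing is double counted. At most $k-a$ escape edges can lie in $A$. These are paid for by the remaining $k-a$ edges at $x$ outside $A$, each contributing at least $1$. Adding everything up gives $\ge rk=k(\Delta-k)$ directly.

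Restricting to only $\Delta-k$ star edges is what removes the sharing that kills your version.
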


\begin{proof}
The assertion is immediate when $k=0$ or $k\ge\Delta$.  Suppose that
$1\le k<\Delta$, and let $r=\Delta-k$.  Choose a vertex $x$ with
$d_G(x)=\Delta$.  Let $a$ be the number of edges of $A$ incident with
$x$.  Exactly $\Delta-a$ edges incident with $x$ lie outside $A$.  Since
$a\le k$,
\[
  \Delta-a\ge\Delta-k=r.
\]
Choose $r$ of these edges, and let $R$ be the set of their endpoints other
than $x$.  Let $X$ be the set of endpoints other than $x$ of the remaining
$(\Delta-a)-r$ edges incident with $x$ outside $A$.  Since $G$ is simple,
all these endpoints are distinct, and
\[
  |R|=r,
  \qquad
  |X|=(\Delta-a)-r=k-a.
\]

For every $z\in R$, maximality of $d_G(x)$ and the hypothesis
$\imb_G(xz)>0$ imply
\[
  d_G(z)<\Delta
  \qquad\text{and}\qquad
  \imb_G(xz)=\Delta-d_G(z).
\]
Consequently,
\begin{equation}\label{eq:reservoir}
  \min\{k,\imb_G(xz)\}+(d_G(z)-r)^{+}=k.
\end{equation}
Indeed, if $d_G(z)\le r$, the two terms on the left are $k$ and $0$;
if $d_G(z)>r$, they are $\Delta-d_G(z)$ and $d_G(z)-r$.

Set
\[
  \rho=\sum_{z\in R}(d_G(z)-r)^{+}.
\]
Call an edge an \emph{escape edge} if it has one endpoint in $R$ and the
other outside $R\cup\{x\}$, and let $F$ be the set of escape edges.  For
each $z\in R$, at most $r$ of its neighbors lie in $R\cup\{x\}$: there
are at most $r-1$ possible neighbors in $R$ and the additional vertex
$x$.  Thus $z$ is incident with at least $(d_G(z)-r)^{+}$ escape edges.
Every escape edge has exactly one endpoint in $R$, so summing over $R$
counts each edge of $F$ exactly once.  Writing $N_G(z)$ for the neighborhood
of $z$, we obtain
\begin{align}
  |F|
  &=\sum_{z\in R}\bigl|N_G(z)\setminus(R\cup\{x\})\bigr|\notag\\
  &\ge\sum_{z\in R}(d_G(z)-r)^{+}\notag\\
  &=\rho.\label{eq:escape}
\end{align}

Summing \eqref{eq:reservoir} over $z\in R$ gives
\begin{align*}
  \sum_{z\in R}\min\{k,\imb_G(xz)\}
  &=\sum_{z\in R}\bigl(k-(d_G(z)-r)^{+}\bigr)\\
  &=rk-\rho.
\end{align*}
Thus the $r$ selected edges from $x$ to $R$ contribute exactly $rk-\rho$
to the left-hand side of \eqref{eq:capacity}.

No escape edge is incident with $x$, while exactly $a$ edges of $A$ are
incident with $x$.  Thus at most $k-a$ escape edges belong to $A$.  By
\eqref{eq:escape},
\begin{align*}
  |F\setminus A|
  &=|F|-|F\cap A|\\
  &\ge\rho-(k-a).
\end{align*}
Since $|F\setminus A|\ge0$, it follows that
\begin{equation}\label{eq:escape-outside}
  |F\setminus A|\ge(\rho-(k-a))^{+}.
\end{equation}
Each edge in $F\setminus A$ contributes at least one to the left-hand side
of \eqref{eq:capacity}, because every edge has positive imbalance.  The
$k-a$ edges from $x$ to $X$ also lie outside $A$ and each contributes at
least one.  The selected edges from $x$ to $R$, the edges from $x$ to $X$,
and the edges in $F\setminus A$ are pairwise disjoint.  Therefore
\begin{align*}
  \sum_{e\in E(G)\setminus A}\min\{k,\imb_G(e)\}
  &\ge rk-\rho+(k-a)+(\rho-(k-a))^{+}\\
  &=rk+(k-a-\rho)^{+}\\
  &\ge rk\\
  &=k(\Delta-k).
\end{align*}
Here the equality uses $q+(-q)^{+}=q^{+}$ with
$q=k-a-\rho$, and the following inequality uses $q^{+}\ge0$.  This proves
\eqref{eq:capacity}.
\end{proof}

\section{Proof of the conjecture}

\begin{proof}[Proof of Theorem~\ref{thm:main}]
If $E(G)=\varnothing$, then $M_G$ is the degree multiset of the empty graph.
Hence assume that $E(G)\ne\varnothing$, and let
\[
  m=|E(G)|,
  \qquad
  \Delta=\max_{v\in V(G)}d_G(v).
\]

The endpoints of every edge have distinct positive degrees, both at most
$\Delta$.  Therefore
\begin{equation}\label{eq:imb-upper}
  \imb_G(e)\le\Delta-1
  \qquad\text{for every }e\in E(G).
\end{equation}

Let $A\subseteq E(G)$ and write $|A|=k$.  By
\eqref{eq:imb-upper} and Lemma~\ref{lem:capacity},
\begin{align}
  \sum_{e\in A}\imb_G(e)
  &\le k(\Delta-1)\notag\\
  &\le k(k-1)+k(\Delta-k)^{+}\notag\\
  &\le k(k-1)
     +\sum_{e\in E(G)\setminus A}\min\{k,\imb_G(e)\}.
  \label{eq:subset-EG}
\end{align}
For the middle inequality, equality holds when $k\le\Delta$, while for
$k\ge\Delta$ it follows from $\Delta-1\le k-1$.

List the elements of $M_G$ in nonincreasing order:
\[
  x_1\ge x_2\ge\cdots\ge x_m>0.
\]
For each $1\le k\le m$, choose $A\subseteq E(G)$ to consist of $k$ edges
whose imbalances are $x_1,\ldots,x_k$.  Then \eqref{eq:subset-EG} becomes
\[
  \sum_{i=1}^k x_i
  \le k(k-1)+\sum_{i=k+1}^m\min\{k,x_i\}.
\]
Thus all the Erd\H{o}s--Gallai inequalities hold.

It remains to verify parity.  For all integers $p$ and $q$,
\[
  |p-q|\equiv p+q\pmod2.
\]
Consequently,
\begin{align*}
  \sum_{e\in E(G)}\imb_G(e)
  &\equiv \sum_{uv\in E(G)}\bigl(d_G(u)+d_G(v)\bigr) \pmod2\\
  &=\sum_{v\in V(G)}d_G(v)^2\\
  &\equiv\sum_{v\in V(G)}d_G(v)\pmod2\\
  &=2m\equiv0\pmod2.
\end{align*}
Hence $\sum_{i=1}^m x_i$ is even.  By Theorem~\ref{thm:EG},
$(x_1,\ldots,x_m)$ is the degree sequence of a finite simple graph $H$.
Thus $\mset{d_H(z):z\in V(H)}=M_G$, so $M_G$ is graphic.
\end{proof}

\clearpage
\section*{Acknowledgements}

The authors thank Eric Hou
(\href{mailto:yhou15@student.ubc.ca}{\texttt{yhou15@student.ubc.ca}})
for carefully checking and verifying Yousof Yavari's initial proof and for
helping improve both its argument and exposition.

\section*{Generative-AI disclosure}

Each author independently discovered a proof of the conjecture.  James
Alexander Schreib discovered his proof on July 24, 2026, using OpenAI's
GPT-5.6 Sol Pro, and Yousof Yavari discovered his proof on August 1, 2026,
using OpenAI's GPT-5.6 Sol Max.  The proof presented in this manuscript is the
merged and polished version of the two independently discovered proofs.
Both authors subsequently reviewed and edited the merged proof and take
responsibility for every definition, calculation, logical inference, and
conclusion in the manuscript.  A machine-checked Lean 4 formalization of the
proof is available at
\url{https://github.com/jamesschreib/imbalance-conjecture}.

\end{document}